
\documentclass[final,leqno,onefignum,onetabnum,dvipsnames]{siamltex1213}
\usepackage[normalem]{ulem}
\usepackage{bbm}
\usepackage{framed}
\usepackage{amssymb}
\usepackage{graphicx}
\usepackage[outdir=./]{epstopdf}
\usepackage{float}
\usepackage{caption}
\usepackage{sidecap}
\usepackage{wrapfig}
\usepackage{mathptmx}
\RequirePackage{ifthen}
\usepackage{times}
\usepackage{amsmath}
\usepackage{algpseudocode}
\usepackage{enumerate}
\usepackage{color}
\usepackage{xcolor}
\usepackage{mathtools}
\DeclarePairedDelimiter\ceil{\lceil}{\rceil}
\DeclarePairedDelimiter\abs{\lvert}{\rvert}
\DeclarePairedDelimiter\norm{\lVert}{\rVert}

\title{Multilevel Ensemble Transform Particle Filtering } 

\author{A. Gregory \footnotemark[2], C. J. Cotter \footnotemark[2] and S. Reich \footnotemark[3]} 

\begin{document}
\maketitle

\renewcommand{\thefootnote}{\fnsymbol{footnote}}

\footnotetext[2]{Department of Mathematics, Imperial College London, Exhibition Road, London SW7 2AZ, UK (a.gregory14@imperial.ac.uk). Alastair Gregory was supported by the Science and Solutions to a Changing Planet DTP and the Natural Environmental Research Council.}
\footnotetext[3]{Institut f{\"u}r Mathematik, Universit{\"a}t Potsdam, Am Neuen Palais 10, D-14469 Potsdam, Germany and Department of Mathematics and Statistics, University of Reading, Whiteknights, Reading RG6 6AX, UK.}

\renewcommand{\thefootnote}{\arabic{footnote}}

\slugger{mms}{xxxx}{xx}{x}{x--x}

\begin{abstract}
This paper extends the Multilevel Monte Carlo variance reduction technique to nonlinear filtering. In particular, Multilevel Monte Carlo is applied to a certain variant
of the particle filter, the Ensemble Transform Particle Filter. A key
aspect is the use of optimal transport methods to re-establish
correlation between coarse and fine ensembles after resampling; this controls
the variance of the estimator.
Numerical examples present a proof of concept of the effectiveness of
the proposed method, demonstrating significant computational cost
reductions (relative to the single-level ETPF counterpart) in the
propagation of ensembles.
\end{abstract}

\begin{keywords} Multilevel Monte Carlo, sequential data assimilation, optimal transport\end{keywords}

\begin{AMS} 65C05, 62M20, 93E11, 93B40, 90C05\end{AMS}

\pagestyle{myheadings}
\thispagestyle{plain}
\markboth{A. GREGORY, C. J. COTTER AND S. REICH}{MULTILEVEL ENSEMBLE PARTICLE TRANSFORM FILTERING}

\section{Introduction}

Data assimilation is the process of incorporating observed data into
model forecasts. In data assimilation, one is interested in computing
statistics $\mathbb{E}_{\eta}[X]$ of solutions $X$ to random dynamical
systems with respect to a posterior measure ($\eta$) given partial
observations of the system. In particle filtering \cite{Doucet,Cappe}, this
is done by using an empirical ensemble representing the posterior
distribution $\eta$ at any one time. The propagation in time of the
members (particles) of this ensemble can be computationally expensive,
especially in high dimensional systems.

Recently, the Multilevel Monte Carlo (MLMC) method has been developed
for achieving significant cost reductions in Monte Carlo simulations
\cite{Giles}. It has been applied to areas such as Markov Chain Monte Carlo \cite{Teckentrup} and quasi-Monte Carlo \cite{GilesQuasi} to return computational cost reductions from exisiting techniques. It has also been applied to uncertainty quantification within PDEs \cite{Cliffe}.
The idea is to consider a hierarchy of discretized
models, balancing numerical error in cheap/coarse models against Monte
Carlo variance in expensive/fine models. It is desirable to adapt
MLMC to sequential Monte Carlo methods such as particle filters, and
some first steps have been taken in this direction. Firstly,
\cite{Hoel} have developed a multilevel Ensemble Kalman Filter
(EnKF), using MLMC estimators to calculate the mean and covariance
of the posterior, in the case where the underlying distributions are
Gaussian and the model is linear. However for non-Gaussian
distributions and nonlinear models, the EnKF is biased. The method does however converge to a ``mean-field limit'' \cite{Legland}. Secondly,
\cite{Beskos} proposed a multilevel sequential Monte Carlo method for
Bayesian inference problems to give significant computational cost
reductions from standard techniques. Our goal in this paper is to take
a step further by applying MLMC to nonlinear filtering problems.

In general, MLMC works by computing statistics from pairs of coarser
and finer correlated ensembles. For Monte Carlo simulation of SDEs,
this correlation is achieved by using the same initial conditions and
Brownian paths for each coarse/fine pair of ensemble members. The key
challenge in applying MLMC to particle filtering is in maintaining
this correlation after resampling. \cite{GilesBook} suggested that
correlating coarse and fine ensembles could be achieved by minimising
the Wasserstein distance between the two ensembles. This can formulated
as a optimal transportation problem \cite{EnsembleForecasting}.

In this paper, we adapt the MLMC framework to the Ensemble Transform
Particle Filter (ETPF) \cite{Reich}. ETPF is an efficient and
effective nonlinear filter that uses optimal transportation
transformations \cite{Villani} instead of random resampling.  In our
Multilevel ETPF (MLETPF) the coupling between coarse and fine
ensembles is also maintained using optimal transportation.  The sole
aim of introducing MLMC to the ETPF is to reduce the computational
cost of the propagation of particles.  This is only a benefit if the
computational cost dominates the optimal transportation transformation
cost; whilst direct solvers for optimal transportation problems with
one-dimensional state space scale as $O\big(Nlog(N)\big)$, solvers for problems with
more than one dimension scale as $O\big(N^3log(N)\big)$ with the ensemble size. To
address this, a technique commonly used in the aforementioned EnKF
known as localisation can be used to reduce this optimal
transportation cost significantly \cite{Cheng}. Our proposed MLETPF
can return significant reductions in the overall computational cost of
ETPF where the particle propagation cost dominates. It will also
return significant reductions in cases where optimal transportation
computational cost dominates, if the localised ETPF is used.

This paper proceeds as follows. Section \ref{sec:MLMC} provides a
background of the MLMC method, \S \ref{sec:PF} describes the
basic particle filtering framework together with the ETPF
scheme. Then, the proposed Multilevel ETPF (MLETPF) method is
presented in \S \ref{sec:MLETPF}, along with numerical examples to demonstrate the
effectiveness of the method. Finally, \S \ref{sec:summary}
provides a summary and outlook.

\section{{The Multilevel Monte Carlo Method}}
\label{sec:MLMC}

The Multilevel Monte Carlo estimator can be viewed as a variance
reduction technique for a standard Monte Carlo estimator.  Suppose one
wishes to compute an approximation of $\mathbb{E}[X_{L}]$,
where $X_{L}$ is a numerical approximation of a random variable $X$
(with discretization accuracy parameter\footnote{Such as the time
  stepsize.} $h_{L} \propto M^{-L}$). The Multilevel Monte Carlo (MLMC) method
introduced in \cite{Giles} considered the case where $X$ is the
solution to a stochastic differential equation (SDE) at time $T>0$;
the discretized solutions $X_{L}$ are obtained from a given numerical
method with stepsize $h_L$. This paper will instead consider $X$ to be
a solution, at time $T>0$, to a general random dynamical system, with
stochastic forcing and/or random initial conditions (drawn from a
distribution $\pi^{0}$). In the simplest case let $X^{i}_{L}$,
$i=1,\ldots,N$, be $N \geq 1$ i.i.d. samples of $X_{L}$. 
The standard, unbiased, Monte Carlo estimator for $\mathbb{E}[X_{L}]$ is then
\begin{equation}
\bar{X}_{L}^{MC}=\frac{1}{N}\sum^{N}_{i=1}X_{L}^{i}.
\label{equation:MonteCarlo}
\end{equation}
Using a telescoping sum of expectations,
\begin{equation}
\mathbb{E}[X_{L}]=\mathbb{E}[X_{0}]+\sum^{L}_{l=1}\mathbb{E}[X_{l}]-\mathbb{E}[X_{l-1}],
\label{equation:linearityexpectation}
\end{equation}
one can define the MLMC approximation to $\mathbb{E}[X_{L}]$ as a sum of independent Monte Carlo estimators, $\bar{X}_{L}=\sum^{L}_{l=0}\hat{X}_{l}$, where
\begin{equation}
\hat{X}_{l}=
\begin{cases}
\sum^{N_{0}}_{i=1}\frac{1}{N_{0}}X_{0}^{i},& l=0, \\
\sum^{N_{l}}_{i=1}\frac{1}{N_{l}}\big(X_{l}^{i}-X_{l-1}^{i}\big),& l>0,
\end{cases}
\label{equation:MLMC}
\end{equation}
leading to
\begin{equation}
\bar{X}_{L}=\frac{1}{N_{0}}\sum^{N_{0}}_{i=1}X_{0}^{i}+\sum^{L}_{l=1}\Big(\frac{1}{N_{l}}\sum^{N_{l}}_{i=1}(X_{l}^{i}-X_{l-1}^{i})\Big).
\label{equation:MLestimator}
\end{equation}
Here, $N_{l}$, $l=0,...,L$, are Monte Carlo sample sizes, of i.i.d. draws from $X_{l},X_{l-1}$ respectively, for each of the $L+1$ estimators. We have
\begin{equation}
\mathbb{E}[\hat{X}_{l}]=
\begin{cases}
\mathbb{E}[X_{0}],& l=0, \\
\mathbb{E}[X_{l}]-\mathbb{E}[X_{l-1}],& l>0 ,
\end{cases}
\label{equation:unbiased}
\end{equation}
and hence the MLMC estimator is an unbiased approximation of
$\mathbb{E}[X_{L}]$. We will call the estimators,
$\hat{X}_{l}$, $l>0$, `multilevel difference estimators'.  The
important thing to note here is that the fine (level $l$) and coarse
(level $l-1$) samples in each difference estimator must be positively
correlated for each $i$ in each of the $L+1$ multilevel difference
estimators. This can be achieved by using the same random system input
(e.g. initial conditions/stochastic forcing) for each $i$ on both
levels. On the other hand, the samples in different difference
estimators must be uncorrelated.

For fixed $T$, the discretization bias (away from $\mathbb{E}[X]$) of the overall estimator is $O(h_{L}^{\alpha})$ \cite{Giles}, where $\alpha$ is the global discretization bias (i.e. $\abs*{\mathbb{E}[X_{L}]-\mathbb{E}[X]}$) of the numerical method used to simulate $X_{l}$, $l\geq0$. One notes from \cite{Giles} that, 
\begin{equation}
\abs*{\mathbb{E}[X_{l}]-\mathbb{E}[X_{l-1}]} \leq (M-1)ch_{l}^{\alpha}
\label{equation:overallbias}
\end{equation}
where $c$ is a positive constant, and that $(M-1)^{-1}\abs*{\mathbb{E}[X_{L}]-\mathbb{E}[X_{L-1}]}$ can be used as an estimate for the overall discretization bias,  $\abs*{\bar{X}_{L}-\mathbb{E}[X]}$. As each estimator in (\ref{equation:MLestimator}) is independent of one another, the overall variance is given by the sum of the variances of each individual estimator. Given that there is a positive correlation between $X_{l}$ and $X_{l-1}$, one can expect then that the sample variance of $X_{l}-X_{l-1}$, denoted by 
\begin{equation}
V_{l}=\mathbb{V}[X_{l}-X_{l-1}]=\mathbb{V}[X_{l}]+\mathbb{V}[X_{l-1}]-2\mathbb{C}ov[X_{l},X_{l-1}],
\end{equation} 
decays at a rate proportional to $l$, so that $\mathbb{V}_{l}=O(h_{l}^{\beta})$, $\beta>0$. The covariance in the last term in \eqref{equation:overallbias} is taken over the joint probability distribution of $X_{l}$ and $X_{l-1}$. One can then trade off variance in fine/expensive estimators against discretization error in coarse/cheap estimators with lower variance by setting a decreasing sequence of Monte Carlo estimator sample sizes,  $N_{0}>N_{1}...>N_{L}$. The overall computational cost of the MLMC estimator is
\begin{equation}
C_{ML}=\sum^{L}_{l=0}h_{l}^{-\gamma}N_{l}T,
\label{equation:standardMLMCcost}
\end{equation}
where, $h_{l}^{-\gamma}$ defines the computational cost of propagating
one single sample (of $X_{l}$) through a discretized system. On the
other hand, the cost of the standard Monte Carlo estimator in
(\ref{equation:MonteCarlo}) is
\begin{equation}
C_{MC}=h_{L}^{-\gamma}NT.
\label{equation:MonteCarloCost}
\end{equation}
One can choose the continuous variables $N_{l}$, $l=0,...,L$, at a rate that minimises the variance of the MLMC estimator for a fixed computational cost, $N_{l} \propto \sqrt{V_{l}h_{l}^{\gamma}}$. In particular, by following a formula given in \cite{Giles,Cliffe}, one can find optimal values of $N_{l}$, as well as the finest level
$L$, and in doing so achieve a computational cost reduction relative to the standard
Monte Carlo counterpart (\ref{equation:MonteCarlo}), with the same
bound Mean Square error. Giles \cite{Giles} proved the following result.
\begin{theorem}
\label{theorem:MLMC MSE}
If the Mean Square Error of $\bar{X}_{L}$ is bounded by $O(\epsilon^{2})$, one can optimally choose $L$ and $N_{l}$ to allow the computational cost of the MLMC estimator to be bounded by,
\begin{equation}
C_{ML} \leq 
\begin{cases}
c_{1}\epsilon^{-2},& \gamma<\beta \\
c_{2}\epsilon^{-2}log(\epsilon)^{2}, & \gamma = \beta \\
c_{3}\epsilon^{-2-\frac{(\gamma-\beta)}{\alpha}},& \gamma > \beta 
\end{cases}
\end{equation}
where $c_{1}$, $c_{2}$, $c_{3}$, $\gamma$, $\beta$ are positive constants and $\alpha \geq \frac{1}{2}min(\gamma,\beta)$.\\
\end{theorem}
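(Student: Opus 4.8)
The plan is to follow Giles's original argument: split the mean square error into squared bias and variance, fix the number of levels $L$ so that the bias is negligible, optimise the per-level sample sizes $N_l$ under a variance budget, and evaluate the resulting geometric cost sum in each of the three regimes. Since $\bar{X}_L$ is unbiased for $\mathbb{E}[X_L]$ by \eqref{equation:unbiased}, and the $\hat{X}_l$ are mutually independent, the mean square error decomposes as
\[
\mathbb{E}\big[(\bar{X}_L - \mathbb{E}[X])^2\big] = \sum_{l=0}^{L}\frac{V_l}{N_l} + \big(\mathbb{E}[X_L]-\mathbb{E}[X]\big)^2 .
\]
I would allocate a budget of $\epsilon^2/2$ to each term. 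The squared bias is $O(h_L^{2\alpha})$ by \eqref{equation:overallbias}, so choosing $L$ with $h_L \lesssim \epsilon^{1/\alpha}$, i.e. $L = O(\log\epsilon^{-1})$, keeps it below $\epsilon^2/2$.

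It then remains to minimise the cost $C_{ML} = \sum_{l=0}^{L} N_l C_l$, with $C_l = h_l^{-\gamma}T$, subject to $\sum_l V_l/N_l \le \epsilon^2/2$. Treating the $N_l$ as continuous and applying a Lagrange multiplier (equivalently, Cauchy--Schwarz) gives the optimal allocation $N_l \propto \sqrt{V_l/C_l} = \sqrt{V_l h_l^{\gamma}}$, the formula quoted before the theorem, and a corresponding cost
\[
C_{ML} \approx \frac{2}{\epsilon^2}\Big(\sum_{l=0}^{L}\sqrt{V_l C_l}\Big)^2 .
\]
Inserting $V_l = O(h_l^{\beta})$ and $h_l \propto M^{-l}$ makes $\sqrt{V_l C_l} \propto M^{-l(\beta-\gamma)/2}$ a geometric sequence in $l$, so the square of its partial sum is estimated case by case: for $\gamma<\beta$ it decays and the sum is $O(1)$, giving $C_{ML}=O(\epsilon^{-2})$; for $\gamma=\beta$ the terms are constant and the sum is $O(L)=O(\log\epsilon^{-1})$, giving the $\epsilon^{-2}\log(\epsilon)^2$ bound; and for $\gamma>\beta$ the sum is dominated by its largest term $\propto h_L^{(\beta-\gamma)/2}$, so substituting $h_L\propto\epsilon^{1/\alpha}$ yields $C_{ML}=O(\epsilon^{-2-(\gamma-\beta)/\alpha})$.

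The main obstacle is not these geometric estimates but the integrality of the $N_l$: the admissible choice is $\lceil N_l\rceil$, whose cost carries an additive overhead $\sum_{l=0}^{L} C_l = T\sum_l h_l^{-\gamma}$ on top of the continuous optimum. This overhead is dominated by its finest term $h_L^{-\gamma}\propto\epsilon^{-\gamma/\alpha}$, and I would check that it does not exceed the leading bound in each regime. This is precisely where the hypothesis $\alpha\ge\tfrac12\min(\beta,\gamma)$ enters: for example, when $\gamma>\beta$ one needs $\gamma/\alpha\le 2+(\gamma-\beta)/\alpha$, which is equivalent to $\alpha\ge\beta/2$, while the cases $\gamma\le\beta$ require $\gamma/\alpha\le 2$, i.e. $\alpha\ge\gamma/2$. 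Verifying this compatibility across all three cases is the delicate bookkeeping that makes the stated condition on $\alpha$ necessary.
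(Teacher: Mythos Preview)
Your argument is correct and is precisely Giles's original proof. Note, however, that the paper does not actually prove this theorem: it simply states it as a result of Giles (\cite{Giles} in the paper) and refers the reader there and to \cite{Cliffe} for details, so there is no independent ``paper's own proof'' to compare against beyond the citation. Your write-up, including the Lagrange-multiplier optimisation, the three-regime geometric sum, and the handling of the $\lceil N_l\rceil$ overhead that explains the role of the condition $\alpha\ge\tfrac12\min(\beta,\gamma)$, faithfully reproduces that cited argument.
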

For $\bar{X}^{MC}_{L}$ to have a Mean Square Error of $O(\epsilon^{2})$, a sample size $N$ of $O(\epsilon^{-2})$ is required, as well as a discretization bias given by $h_{L}=O(\epsilon^{\frac{1}{\alpha}})$. Thus any of the computational costs in Theorem \ref{theorem:MLMC MSE} are less than $C_{MC}$ ($O(\epsilon^{-2-\frac{\gamma}{\alpha}})$). The MLMC approach in principle is very simple to implement and can be very effective as long as one can satisfy the two constraints, $\alpha\geq \frac{1}{2}min(\gamma,\beta)$ and $\beta>0$. More detail on this method can be found in a generalised explanation, and related theorems, in \cite{Cliffe}. 

\section{{Particle Filtering}}
\label{sec:PF}

This section will outline the standard particle filtering methodology. In this context, one is interested in computing statistics of a random process $X_{t}$, 
conditioned on observations of a single realisation of $X$, denoted $X'$, and 
referred to as the reference solution. The observations are random variables 
of the form
\begin{equation}
Y_{t_{k}}=H(X_{t_{k}}')+\phi
\end{equation}
at times $t \in (t_1,\ldots,t_{N_{y}})$, where $H:\mathbb{R}^{d} \to
\mathbb{R}^{y}$ is an observation operator, and $\phi$ is a random
variable representing measurement error. For simplicity, we choose
$\phi\sim N(0,R)$, $R$ is a $y \times y$ covariance matrix, and we
will take $H$ to be the identity (so $y$=$d$). We define $X_{L,t_{k}}$ to be a numerical discretization of $X_{t_{k}}$ with discretization accuracy parameter $h_{L}$. Our aim here is to sequentially approximate $\mathbb{E}_{\eta_{L,t_{k}}}[X_{L,t_{k}}]$, the expectation of $X_{L,t_{k}}$ with respect to the measure $\eta_{L,t_{k}}$, where $\eta_{L,t_{k}}$ is the posterior of $X_{L,t}$ given the observations $Y_{t_{1},...,t_{k}}$. Let $p(y|x)$ be the likelihood function of $y$ given $x$ and $q(x)$ be the prior of $x$. Then, for any $k \in [1,N_{y}]$, using Importance Sampling \cite{Doucet}, one can draw $N$ i.i.d samples from the empirical approximation of the prior of $X_{L,t_{k}}$, 
\begin{equation}
\hat{q}(X_{L,t_{k}})=\sum^{N}_{i=1}\tilde{w}_L(X^i_{L,t_{k-1}})\delta(X_{L,t_{k}}-X_{L,t_k}^{i}),
\end{equation}
and denote the normalised importance weights of sample $i \in [1,N]$ to be
\begin{equation}
\tilde{w}_{L}(X_{L,t_k}^{i})=\frac{w_{L}(X_{L,t_k}^{i})}{\sum^{N}_{j=1}w_{L}(X_{L,t_k}^{j} )},
\end{equation}
where
\begin{equation}
w_{L}(X_{L,t_k}^{i}) = p(Y_{t_k}|X_{L,t_k}^{i})\tilde{w}_L(X_{L,t_{k-1}}^{i}).
\label{equation:weights}
\end{equation}
Thus, the filter weights are defined iteratively, starting from $\tilde{w}_{L}(X_{L,t_{0}}^{i})=\frac{1}{N}$.
As the observations are given by a Gaussian distribution, the likelihood is
\begin{equation}
p(Y_{t_{k}}|X_{L,t_{k}}^{i})=\frac{1}{\sqrt{2\pi}\abs*{R}^{y/2}}e^{-\frac{1}{2}\big(H(X_{L,t_{k}}^{i})-Y_{t_{k}}\big)^{T}R^{-1}\big(H(X_{L,t_{k}}^{i})-Y_{t_{k}}\big)}.
\end{equation}
Finally, an estimator for the expectation of $X_{L,t_{k}}$ with respect to the posterior $\eta_{L,t_{k}}$ is
\begin{equation}
\bar{X}_{L,t_{k}}=\sum^{N}_{i=1}\tilde{w}_{L}(X_{L,t_{k}}^{i})X_{L,t_{k}}^{i}.
\label{equation:PosteriorMean}
\end{equation}
This estimator, despite being biased by $O(N^{-1})$ due to the normalised importance weights in a single importance sampling update, is consistent with $\mathbb{E}_{\eta_{L,t_{k}}}[X_{L,t_{k}}]$. This is to say, as $N \to \infty$, the estimator converges in probability to $\mathbb{E}_{\eta_{L,t_{k}}}[X_{L,t_{k}}]$.

Typically, importance weights become degenerate as $k$ increases \cite{Cappe}. In this case, it is necessary to  duplicate higher weighted particles whilst removing lower weighted particles; this is known as resampling. Resampling resembles an unbiased transformation from the weighted ensemble, $\big\{X_{l,t_{k}}^{i},\tilde{w}_{l}(X_{l,t_{k}}^{i})\big\}_{i=1,\ldots,N}$ to an evenly weighted ensemble of resampled particles $\big\{\tilde{X}_{l,t_{k}}^{i}\big\}_{i=1,\ldots,N}$. The scheme outlined above is known as the Sequential Importance Resampling (SIR) method. For more information on this turn to \cite{Doucet}. The Ensemble Transform Particle Filter, the subject of this paper, uses Optimal Transportation \cite{Villani} to implement this transformation, which we describe next.

\subsection{Ensemble Transform Particle Filters}
\label{sec:ETPF}

ETPFs are a variant of linear ensemble transform filters (LETF) \cite{Reich}. They present an alternative to the resampling step that takes place in the standard SIR methodology, replacing it with a linear transformation. The goal is to obtain a transformed set of evenly weighted particles, $\big\{\tilde{X}_{L,t_{k}}^{i}\big\}_{i=1,\ldots,N}$, from the weighted set of particles $\big\{X_{L,t_{k}}^{i}\big\}_{i=1,\ldots,N}$, with importance weights $\big\{\tilde{w}_{L}(X_{L,t_{k}}^{i})\big\}_{i=1,\ldots,N}$, defining an empirical approximation to the posterior distribution $\eta_{L,t_{k}}$. This can be done with the following linear transformation,
\begin{equation}
\tilde{X}_{L,t_{k}}^{j}=\sum^{N}_{i=1}P_{i,j}X_{L,t_{k}}^{i}
\label{equation:LETPF}
\end{equation}
for $i=1,\ldots,N$ and $j=1,\ldots,N$ with non-zero entries for $P_{i,j}$. Here, $\sum^{N}_{i=1}P_{i,j}=1$. Let $Z_{L,t_{k}}$ denote the discrete random variable with samples $X_{L,t_{k}}^{i}$ and associated probability vector $\tilde{w}_{L}(X_{L,t_{k}}^{i})$, $i=1,...,N$. Then take $\tilde{Z}_{L,t_{k}}$ to be the discrete random variable with samples $X_{L,t_{k}}^{i}$, $i=1,...,N$ all with equal probability. For the ETPF, one creates a coupling between $Z_{L,t_{k}}$ and $\tilde{Z}_{L,t_{k}}$, denoted by the matrix $T_{i,j}$, size $N \times N$, with non-negative entries. The coupling defines the linear transformation matrix in (\ref{equation:LETPF}) as $P_{i,j}=NT_{i,j}$. This coupling can be found by solving a linear transport problem 
by minimising the expected Euclidean distance between $Z_{L,t_{k}}$ and $\tilde{Z}_{L,t_{k}}$, subject to the
constraints
\begin{equation}
\sum^{N}_{i=1}T_{i,j}=\frac{1}{N}, \quad
\sum^{N}_{j=1}T_{i,j}=\tilde{w}_{L}(X_{L,t_{k}}^{i}).
\end{equation}
This is in fact equivalent to maximising the covariance between the two ensembles, since
\begin{equation}
\begin{split}
&\mathbb{E}_{Z_{L,t_{k}},\tilde{Z}_{L,t_{k}}}[\norm*{z_{L,t_{k}}-\tilde{z}_{L,t_{k}}}^{2}]=\mathbb{E}_{Z_{L,t_{k}}}[\norm*{z_{L,t_{k}}}^{2}]+\mathbb{E}_{\tilde{Z}_{L,t_{k}}}[\norm*{\tilde{z}_{L,t_{k}}}^{2}] \dots \\
\quad & \dots -2\mathbb{E}_{Z_{L,t_{k}}}[z_{L,t_{k}}]^{T}\mathbb{E}_{\tilde{Z}_{L,t_{k}}}[\tilde{z}_{L,t_{k}}]-2Tr\big(Cov_{Z_{L,t_{k}},\tilde{Z}_{L,t_{k}}}[z_{L,t_{k}},\tilde{z}_{L,t_{k}}]\big).
\end{split}
\label{equation:Identity}
\end{equation}
In a univariate case, we define an optimal coupling matrix, $T_{i,j}$, as one which minimises the cost function,
\begin{equation}
\sum^{N}_{i=1}\sum^{N}_{j=1}T_{i,j}(X_{L,t_{k}}^{i}-X_{L,t_{k}}^{j})^{2}
\end{equation}
Theoretical analysis of the above transformation is given in
\cite{Reich}. Once the transformed particles in
(\ref{equation:LETPF}) are found, the posterior mean is now estimated
by, 
\begin{equation}
\bar{X}_{L,t_{k}}=\frac{1}{N}\sum^{N}_{j=1}\tilde{X}_{L,t_{k}}^{j}.
\end{equation}
It is important to note that this linear transformation, which is
deterministic, will give the same estimator as in
(\ref{equation:PosteriorMean}), and thus does not add considerable
extra variance to the estimator from random resampling.  This is a
consistent estimator for the previous posterior mean estimator
$\Big(\sum^{N}_{i=1}\tilde{w}_{L}(X_{L,t_{k}}^{i})X_{L,t_{k}}^{i}\Big)$,
since
\begin{equation}
\bar{X}_{L,t_{k}}=\frac{1}{N}\sum^{N}_{j=1}\tilde{X}_{L,t_{k}}^{j}=\frac{1}{N}\sum^{N}_{j=1}\sum^{N}_{i=1}T_{i,j}NX_{L,t_{k}}^{i}=\sum^{N}_{j=1}\sum^{N}_{i=1}T_{i,j}X_{L,t_{k}}^{i}=\sum^{N}_{i=1}\tilde{w}_{L}(X_{L,t_{K}}^{i})X_{L,t_{k}}^{i}
\label{equation:ConsistentET}
\end{equation}

In the univariate case, the matrix $T_{i,j}$ can easily be found by an $O\big(Nlog(N)\big)$ algorithm in \cite{EnsembleForecasting}. This will become an important observation when discussing localisation in the next section. The above constraints lead to a maximum of $2N-1$ non-zero elements in $T_{i,j}$, leading to a very sparse matrix calculation, and thus the ensemble transformation process can be achieved in a $O(N)$ computational cost. The $O\big(Nlog(N)\big)$ computational cost comes from the fact that one has to sort the univariate particles prior to the algorithm. In our numerical experiments at the end of this paper, this sorting was a negligible part of the ensemble transform computational cost. This allows one to be able to carry the ensemble transform out on every assimilation step without the computational expense of this being of a higher order of magnitude than the propagation of the particles in between assimilation steps, but more analysis will cover this observation in the next section. \\
In the multivariate case, the same linear transport problem prevails, however one is required to minimise the cost function,
\begin{equation}
\sum^{N}_{i=1}\sum^{N}_{j=1}T_{i,j}\norm*{X_{L,t_{k}}^{i}-X_{L,t_{k}}^{j}}^{2},
\label{equation:multivariatLTP}
\end{equation}
whose minimum defines the Wasserstein distance between $Z_{L,t_{k}}$ and $\tilde{Z}_{L,t_{k}}$. This can be solved in a $O\big(N^{3}log(N)\big)$ computational cost, using algorithms such as the FastEMD algorithm \cite{FastEMD}. However, this means that with many systems, this optimal transportation computational cost will dominate over model costs of the system and thus the scheme is not efficient. The model costs of the particle filter are defined to be the computational cost needed to propagate the $N$ particles through the system in between assimilation steps and is given in (\ref{equation:MonteCarloCost}) (determined by a constant $\gamma$). Thankfully, a technique called \textit{localisation} can aid this problem, and can also provide a pivotal change to the scheme when applying it to high dimensional systems. 

\subsubsection{Localisation}

Localisation, a scheme frequently used in the EnKF for high dimensional systems, can also be applied to the ETPF \cite{Cheng}. In the simplest form, localisation applied to the ETPF means that one can reduce the computational cost of designing a multivariate coupling to $d$ times the cost of designing a univariate coupling. Localisation allows one to construct an individual transformation in (\ref{equation:LETPF}) for each of the $d$ components of a multivariate $X_{L,t_{k}}$. A simple definition for the localisation matrix $C$ \cite{Cheng} that describes the spatial correlation structure of the ensemble $\big\{X_{L,t_{k}}\big\}_{i=1,\ldots,N}$ could be
\begin{equation}
C_{m,n}=
\begin{cases}
1-\frac{1}{2}\big(\frac{s_{m,n}}{r_{loc,c}}\big),& \big(\frac{s_{m,n}}{r_{loc,c}}\big) \leq 2, \\
0,& otherwise.
\end{cases}
\label{equation:localisationmatrix}
\end{equation}
Here $m,n=1,\ldots,d$ are the indicies of the spatial components of $X_{L,t_{k}}$,
\begin{equation}
s_{m,n}=\min\big\{\abs*{m-n-N},\abs*{m-n},\abs*{m-n+N}\big\},
\end{equation}
and $r_{loc,c}$ is a constant. The above form for $C_{m,n}$ explicitly takes spatial-periodicity into account. One can now decompose the linear transport problem in (\ref{equation:multivariatLTP}) into $d$ separate linear transport problems, to find a coupling matrix $T_{i,j}(m)$, $i=1,\ldots,N$, $j=1,\ldots,N$, for each component $m=1,\ldots,d$. The objective of these linear transport problems is minimising the cost function
\begin{equation}
\sum^{N}_{i=1}\sum^{N}_{j=1}T_{i,j}(m)\norm*{X_{L,t_{k}}^{i}-X_{L,t_{k}}^{j}}^{2}_{m},
\end{equation}
where
\begin{equation}
\norm*{X_{L,t_{k}}^{i}-X_{L,t_{k}}^{j}}^{2}_{m}=\sum^{d}_{n=1}C_{m,n}\big(X_{L,t_{k}}^{i}(n)-X_{L,t_{k}}^{j}(n)\big)^{2},
\label{equation:localisationminimizer}
\end{equation}
subject to the constraints
\begin{equation}
\sum^{N}_{i=1}T_{i,j}(m)=\frac{1}{N}, \quad
\sum^{N}_{j=1}T_{i,j}(m)=\tilde{w}_{L}(X_{L,t_{k}}^{i}).
\label{equation:localisationconstraints}
\end{equation}
Here, $X_{L,t_{k}}^{i}(m)$ is the $m$'th component of $X_{L,t_{k}}^{i}$. Then, one can define the approximation of the marginal posterior mean for each $m=1,\ldots,N$ as
\begin{equation}
 \bar{X}_{L,t_{k}}(m)=\frac{1}{N}\sum^{N}_{j=1}\tilde{X}_{L,t_{k}}^{j}(m),
\end{equation}
where the transformed components are given by
\begin{equation}
\tilde{X}_{L,t_{k}}^{j}(m)=\sum^{N}_{i=1}P_{i,j}(m)X_{L,t_{k}}^{i}(m),
\end{equation}
and $P_{i,j}(m)=T_{i,j}(m)N$. Note the cost functions in (\ref{equation:localisationminimizer}) do not achieve the minimum of (\ref{equation:multivariatLTP}). When $r_{loc,c}=0$, exhibiting the most computationally efficient scenario, one has the interesting case where $d$ univariate linear transport problems need to be solved, thus transforming all components individually. One can simply use the univariate algorithm in \cite{EnsembleForecasting}, mentioned in the last section, with a computational cost of $O\big(Nlog(N)\big)$, for each linear transport problem, to get an overall $O\big(dNlog(N)\big)$ computational cost. In practice, when $r_{loc,c}=0$, one can also reorder each of the transformed sets of components into the rank structure of the original ensemble. This preserves the copula structure \cite{Schefzik} of the original ensemble. \\ 
If the model costs of a system are less than that of the multivariate optimal transportation, using $r_{loc,c}=0$ is the only case in which the model costs can return to being the dominative cost in the ETPF estimator. Despite this, it is very reasonable to imagine that this model cost will dominate that of the optimal transportation in some systems, especially for high dimensional Partial Differential Equations (i.e. where $\gamma$ is high). Localisation does have an effect on the performance of the ETPF by adding bias into the posterior mean approximation stemming from the fact that one is generating deterministic couplings $T_{i,j}$ that are minimising different (simplified) cost functions to the full, multivariate one in (\ref{equation:multivariatLTP}). This bias is thus caused by the decay in correlation between the components. Despite this, numerical experiments conducted in \cite{Cheng} find the localised ETPF to be effective even in the chaotic, highly nonlinear Lorenz Equations. \\
Localisation is also needed in the likelihood evaluation of multivariate particles in the ETPF. Although this is not critical to the aim of this paper, only briefly covered here, it is essential for the ETPF to be able to successfully filter high dimensional systems due to the curse of dimensionality. Standard Sequential Monte Carlo (SMC) methods fail to track high dimensional systems due to exponentially degenerate importance weights. However, while there have only been some suggestions for a solution to this problem in SMCs, such as in \cite{Rebeschini}, one can alter the above localisation scheme in the ETPF to solve this problem swiftly \cite{Cheng}. It is also needed to reduce the computational cost of likelihood evaluations when the dimension of the state space is greater than the sample size ($d > N$). For each component ($m=1,\ldots,d$) in the particles $\big\{X_{L,t_{k}}^{i}\big\}_{i=1,\ldots,N}$, generate an separate importance weight given by
\begin{equation}
w_{L}(X_{L,t_{k}}^{i}(m)) \propto \frac{1}{\sqrt{2\pi}\abs*{R}^{y/2}}e^{-\frac{1}{2}\big(H(X_{L,t_{k}}^{i})-Y_{t_{k}}\big)^{T}(\tilde{C}_{m})R^{-1}\big(H(X_{L,t_{k}}^{i})-Y_{t_{k}}\big)},
\label{equation:localisedweight}
\end{equation}
where
\begin{equation}
(\tilde{C}_{m})_{n,n}=
\begin{cases}
1-\frac{1}{2}\big(\frac{s_{m,n}}{r_{loc,R}}\big),& \big(\frac{s_{m,n}}{r_{loc,R}}\big) \leq 2, \\
0,& otherwise,
\end{cases}
\end{equation}
for $n=1, \dots N$ ($\tilde{C}_{m}$ is diagonal) and the value of $r_{loc,R}$ can be independent to $r_{loc,c}$. Of course, $H$ should be a local operator, see \cite{Anderson} for details of the use of localisation within the EnKF. These weights are then used in the constraints in the linear transport problems for each individual component transformation in (\ref{equation:localisationconstraints}). The two `radii' of localisation, $r_{loc,c}$ and $r_{loc,R}$ will henceforth be refered to as the particular settings of localisation used. 

\section{{Multilevel Ensemble Transform Particle Filter (MLETPF)}}
\label{sec:MLETPF}

The proposed multilevel ETPF framework is demonstrated in this section. It creates an estimator consistent with the standard ETPF estimator in (\ref{equation:PosteriorMean}), for the same discretization accuracy level, $L$. The term `single level' estimator will henceforth be a reference to the corresponding standard ETPF estimator, conditioned on the same observations, with the same discretization level $L$ and variance as the proposed MLETPF estimator. The general premise of the MLETPF is to run $L+1$ independent ETPF estimators, with $N_{l}$ samples, forward in time (and space), in the coupled multilevel framework. When updating the weights of each particle in each estimator, the same method as the ETPF holds for each of the $L+1$ estimators. Thus, we define the MLETPF estimator of $\mathbb{E}_{\eta_{L,t_{k}}}[X_{L,t_{k}}]$, as the following, where the importance weights $\tilde{w}_{l}(X_{l,t_{k}})$ target $\eta_{l,t_{k}}$, the posterior of each $d$-dimensional discretization $X_{l,t_{k}}$, given the observations, $Y_{t_{1},...,t_{k}}$, $k \in [1,N_{y}]$,
\begin{equation}
\bar{X}_{L,t_{k}}=\Big(\sum^{N_{0}}_{i=1}\tilde{w}_{0}(X_{0,t_{k}}^{i})X_{0,t_{k}}^{i}\Big)+\sum^{L}_{l=1}\Big(\sum^{N_{l}}_{i=1}\big(\tilde{w}_{l}(X_{l,t_{k}}^{i})X_{l,t_{k}}^{i}-\tilde{w}_{l-1}(X_{l-1,t_{k}}^{i})X_{l-1,t_{k}}^{i}\big)\Big)
\label{equation:MLMCETPF}
\end{equation}
We assume here that $h_{0} \leq \Delta t$, where $\Delta t = t_{k+1} - t_{k}$ for all $k \in [1,N_{y}-1]$, so that all of the $L+1$ estimators are conditioned on the same observations. This does mean that one has to set a bound on the frequency of the data assimilation, given the time-step of the minimum level, $h_{0}$. We note that it is possible to adjust the framework here slightly to incorporate frequent observations only available on finer levels at certain times. This could be done by only updating the weights for finer ensembles at those observations and then proceeding with the ensemble transform stages when both the coarse and fine levels in each difference estimator have had an importance weight update. \\
One notes that as the standard ETPF estimator for each level of descritization, $l \geq 0$, is consistent with $\mathbb{E}_{\eta_{l,t_{k}}}[X_{l,t_{k}}]$, the above estimator is consistent with the $\mathbb{E}_{\eta_{L,t_{k}}}[X_{L,t_{k}}]$, given the linearity of expectation shown in (\ref{equation:linearityexpectation}). Here, each of the particles from the fine and coarse ensembles in each of the multilevel difference estimators are positively correlated in between assimilation steps as in the standard MLMC method. This correlation is required for the variance of each difference estimator to decay with $l \to \infty$ as discussed in the opening section. However, now in the ETPF context, when one comes to transform the fine and coarse ensembles in each multilevel difference estimator, the two ensembles cannot be transformed independently of one another, and need to have a positive correlation imparted between them ready for the next phase of particle propagation, especially if the transformations are happening frequently. If the random input to the system is simply a random initial condition, in a system with no stochastic forcing, these particles from the fine and coarse ensembles will certainly diverge instantly if they are not positively correlated after the ensemble transformations. In this paper, this positive correlation is achieved using a multilevel coupling step after the standard ensemble transform stage. This requires one to first carry out the ensemble transform (\ref{equation:LETPF}) on the coarse and fine ensembles, $\big\{X_{l-1,t_{k}}^{i}\big\}_{i=1,\ldots,N_{l}}$, $\big\{X_{l,t_{k}}^{i}\big\}_{i=1,\ldots,N_{l}}$ with weights $\big\{w_{l-1}(X_{l-1,t_{k}}^{i})\big\}_{i=1,\ldots,N_{l}}$, $\big\{w_{l}(X_{l,t_{k}}^{i})\big\}_{i=1,\ldots,N_{l}}$ respectively, to get evenly weighted particles, $\big\{\tilde{X}_{l-1,t_{k}}^{i}\big\}_{i=1,\ldots,N_{l}}$ and $\big\{\tilde{X}_{l,t_{k}}^{i}\big\}_{i=1,\ldots,N_{l}}$. If localisation is needed, one can implement this with the required parameters on both ensemble transforms as in the last section. It is very important that the same localisation settings are used on all estimators, so that the overall MLETPF estimator is consistent with the single level ETPF estimator with the same localisation settings. The key point of this being that the fine and coarse ensembles from each discretization level will have the same systematic localisation bias as one another. This means, such as with the discretization bias, that the localisation biases can cancel each other out in the telescoping sum of estimators (\ref{equation:MLMCETPF}), leaving only the systematic localisation bias of the finest level equal to that of the localised single level estimator. At this point, one notes that (\ref{equation:MLMCETPF}) becomes
\begin{equation}
\bar{X}_{L,t_{k}}=\Big(\frac{1}{N_{0}}\sum^{N_{0}}_{i=1}\tilde{X}_{0,t_{k}}^{i}\Big)+\sum^{L}_{l=1}\Big(\frac{1}{N_{l}}\sum^{N_{l}}_{i=1}\big(\tilde{X}_{l,t_{k}}^{i}-\tilde{X}_{l-1,t_{k}}^{i}\big)\Big).
\label{equation:TransformedMLMCETPF1}
\end{equation}
Now one needs to positively couple the fine and coarse ensembles of transformed particles from each estimator above. We propose to build another coupling between $\tilde{X}_{l,t_{k}}$ and $\tilde{X}_{l-1,t_{k}}$, denoted by $T^{F/C}_{i,j}$, that minimises the cost function
\begin{equation}
\sum^{N_{l}}_{i=1}\sum^{N_{l}}_{j=1}T^{F/C}_{i,j}\norm*{\tilde{X}_{l,t_{k}}^{i}-\tilde{X}_{l-1,t_{k}}^{j}}^{2},
\label{equation:multilevelcouplingcostfunction}
\end{equation}
with constraints
\begin{equation}
\sum^{N}_{i=1}T^{F/C}_{i,j}=\frac{1}{N_{l}}, \qquad
\sum^{N}_{j=1}T^{F/C}_{i,j}=\frac{1}{N_{l}}.
\label{equation:couplingconstraints}
\end{equation}
This is an assignment problem and in the multivariate case it can be solved by the Hungarian algorithm \cite{Hungarian} with a computational cost equal to the multivariate linear transport problem algorithms discussed previously and so is the same order of magnitude as the corresponding ensemble transform stage in the standard ETPF method. In the univariate case, one can simply use the cheap algorithm in \cite{EnsembleForecasting}, exactly like the ensemble transform stage. One notes that the above assignment problem returns a coupling with one element in each row and column ($\frac{1}{N_{l}}$), resulting in particles simply being reordered and not transformed. This therefore returns exactly the same transformed particles in each ensemble. The reordering can be seen as finding the transformation matrix $P_{i,j}^{F/C}=T^{F/C}_{i,j}N_{l}$ and then applying the standard ensemble transform, in (\ref{equation:LETPF}), to both the fine and coarse transformed ensembles to get new ensembles of  $\big\{\tilde{\tilde{X}}_{l,t_{k}}^{i}\big\}_{i=1,\ldots,N_{l}}$ and $\big\{\tilde{\tilde{X}}_{l-1,t_{k}}^{i}\big\}_{i=1,\ldots,N_{l}}$ which are now positively correlated. Each multilevel difference estimator can now be estimated by
\begin{equation}
\hat{X}_{l,t_{k}}=\frac{1}{N_{l}}\sum^{N_{l}}_{j=1}\big(\tilde{\tilde{X}}_{l,t_{k}}^{j}-\tilde{\tilde{X}}_{l-1,t_{k}}^{j}\big).
\label{equation:finerposteriormean}
\end{equation}
Using a calculation similar to \eqref{equation:ConsistentET},
we now show that the estimator in (\ref{equation:finerposteriormean}) is consistent with the term\begin{equation} 
\Big(\sum^{N_{l}}_{i=1}\big(\tilde{w}_{l}(X_{l,t_{k}}^{i})X_{l,t_{k}}^{i}-\tilde{w}_{l-1}(X_{l-1,t_{k}}^{i})X_{l-1,t_{k}}^{i}\big)\Big)
\end{equation}
 from Equation (\ref{equation:MLMCETPF}). Let $T^{F}_{i,j}$ and $T^{C}_{i,j}$ ($i,j=1,\ldots,N$) be the coupling matrices used for the ensemble transform on the finer and coarse ensembles respectively, then
\begin{equation}
\begin{split}
&\hat{X}_{l,t_{k}}
=\frac{1}{N_{l}}\sum^{N_{l}}_{j=1}\big(\tilde{\tilde{X}}_{l,t_{k}}^{j}-\tilde{\tilde{X}}_{l-1,t_{k}}^{j}\big)
=\sum^{N_{l}}_{j=1}\sum^{N_{l}}_{i=1}T^{F/C}_{i,j}\big(\tilde{X}_{l,t_{k}}^{i}-\tilde{X}_{l-1,t_{k}}^{i}\big)\\
\quad &=\frac{1}{N_{l}}\sum^{N_{l}}_{i=1}\sum^{N_{l}}_{j=1}N_{l}\big(T^{F}_{i,j}X_{l,t_{k}}^{j}-T^{C}_{i,j}X_{l-1,t_{k}}^{j}\big)
=\sum^{N_{l}}_{j=1}\big(\tilde{w}_{l}(X_{l,t_{k}}^{j})X_{l,t_{k}}^{j}-\tilde{w}_{l-1}(X_{l-1,t_{k}}^{j})X_{l-1,t_{k}}^{j}\big).
\end{split}
\label{equation:MLETPFconsistency}
\end{equation}
The estimator in (\ref{equation:TransformedMLMCETPF1}) can therefore be written as
\begin{equation}
\bar{X}_{L,t_{k}}=\Big(\frac{1}{N_{0}}\sum^{N_{0}}_{i=1}\tilde{\tilde{X}}_{0,t_{k}}^{i}\Big)+\sum^{L}_{l=1}\Big(\frac{1}{N_{l}}\sum^{N_{l}}_{i=1}\big(\tilde{\tilde{X}}_{l,t_{k}}^{i}-\tilde{\tilde{X}}_{l-1,t_{k}}^{i}\big)\Big),
\label{equation:TransformedMLMCETPF}
\end{equation}
with covariance
\begin{equation}
\mathbb{C}ov[\bar{X}_{L,t_{k}}]=\sum^{L}_{l=0}\frac{\mathbb{V}_{l}}{N_{l}},
\end{equation}
where
\begin{equation}
\mathbb{V}_{l}=
\begin{cases}
\mathbb{C}ov[\tilde{\tilde{X}}_{0,t_{k}}],& l=0, \\
\mathbb{C}ov[\tilde{\tilde{X}}_{l,t_{k}}-\tilde{\tilde{X}}_{l-1,t_{k}}],& l>0, 
\end{cases}
\end{equation}
due to the fact that each estimator in (\ref{equation:TransformedMLMCETPF}) is independent. The above estimator is consistent with the single level (localised, with the same settings as used in the MLETPF) ETPF estimator due to (\ref{equation:MLETPFconsistency}) and (\ref{equation:MLMCETPF}). Therefore, in the absence of  localisation, it is also consistent with $\mathbb{E}_{\eta_{L,t_{k}}}[X_{L,t_{k}}]$. 

The multilevel coupling $T^{F/C}_{i,j}$ minimises the expected
distance between the two transformed ensembles, which maximises the
covariance between them via (\ref{equation:Identity}) and then finally
minimises $V_{l}$. The multilevel coupling procedure above minimises
$\mathbb{V}_{l}$ in each multilevel difference estimator; we also
ensure that pairs of particles in the coarse and fine ensembles are
positively coupled in between assimilation steps (by using the same
random input). The aim of this is to make the covariance
$\mathbb{V}_{l}$ decrease at an asymptotic rate $O(h_{l}^\beta)$
($\beta>0$) required for the variance reduction of the multilevel
framework to work. This is because the fine and coarse ensembles are
coupled both in between assimilation steps and during them via the
coupling. This will be demonstrated in numerical experiments later in
the paper. Designing this coupling between the both transformed
ensembles $\tilde{X}_{l-1,t_{k}}$ and $\tilde{X}_{l,t_{k}}$ is the key
to the proposed MLETPF method, and enforces correlation amongst both
transformed ensembles whilst remaining consistent with the single
level ETPF estimator. Most importantly it also suits the ETPF method
since the coupling can be generated simply and cheaply when using
the $r_{loc,c}=0$ localisation that can be used freely in the
ETPF. This will be shown later.

\subsection{Algorithm}

\label{subsection:algorithm}

In this section, we present an algorithm to implement the MLETPF in practice. In this paper, a pre-defined recurrence relation for the decay of $N_{l}$ as $l \to \infty$ will be set ($N_{l+1}=f(N_{l})$) and these sample sizes will be kept fixed throughout the filtering process. The finest discretization level $L>0$ will be kept arbitrary for now. The algorithm is now presented.

\begin{remunerate}

\item Start at $t_{0}$ (thus $k=0$) and with $l=0$. Choose $N_{0}$.

\item Calculate $N_{l}=\ceil*{f(N_{l-1})}$ if  $l>0$. Sample $\big\{ X_{0,t_{0}}^{i} \big\}_{i=1,\ldots,N_{0}} \sim \pi^{0}$ if $l=0$ or $\big\{ X_{l,t_{0}}^{i}, X_{l-1,t_{0}}^{i} \big\}_{i=1,\ldots,N_{l}} \sim \pi^{0}$, where $X_{l,t_{0}}^{i}=X_{l-1,t_{0}}^{i}$ if $l>0$, at time $t_{0}$. 

\item Propagate all samples forward according to system dynamics until time $t_{k+1}$. If $l>0$, the fine and coarse pairs of samples in each estimator must be coupled by using the same random input.

\item Derive the normalised importance weights for $X_{l,t_{k+1}}$:

\begin{equation}
\tilde{w}_{l}(X_{l,t_{k+1}}^{i})=\frac{w_{l}(X_{l,t_{k+1}}^{i})}{\sum^{N_{l}}_{j=1}w_{l}(X_{l,t_{k+1}}^{j})}
\end{equation}

\item Transform the ensembles, $\big\{X_{l,t_{k}}^{i},\tilde{w}_{l}(X_{l,t_{k}}^{i})\big\}_{i=1,\ldots,N_{l}}$ and $\big\{X_{l-1,t_{k}}^{i},\tilde{w}_{l-1}(X_{l-1,t_{k}}^{i})\big\}_{i=1,\ldots,N_{l}}$ into the evenly weighted ensembles, $\big\{\tilde{X}_{l,t_{k}}^{i}\big\}_{i=1,\ldots,N_{l}}$ and $\big\{\tilde{X}_{l-1,t_{k}}^{i}\big\}_{i=1,\ldots,N_{l}}$, using the linear transformation in (\ref{equation:LETPF}).
Couple them using the multilevel coupling matrix $T^{F/C}_{i,j}$ to produce reordered ensembles $\big\{\tilde{\tilde{X}}_{l,t_{k}}^{i}\big\}_{i=1,\ldots,N_{l}}$ and $\big\{\tilde{\tilde{X}}_{l-1,t_{k}}^{i}\big\}_{i=1,\ldots,N_{l}}$.

\item Move on to Step 7 if $l=L$ or if not, iterate $l+=1$ then repeat steps 2-6 for $k=0$ or steps 3-6 for $k>0$.

\item Iterate k+=1. Start again from step 3 with $l=0$. The MLETPF approximation of $\mathbb{E}_{\eta_{L,t_{k}}}[X_{L,t_{k}}]$ is given by (\ref{equation:TransformedMLMCETPF}).

\end{remunerate}

\subsection{Computational Cost of the MLETPF and ETPF}

As previously noted, in a multidimensional case, it is 
computationally expensive to generate the coupling matrices needed to
couple the fine and coarse transformed particles.
Localisation is used to reduce the computational cost of the ensemble transforms down to $O\big(dNlog(N)\big)$ when $r_{loc,c}=0$ in the standard and multilevel ETPF methods, and this too can also reduce the computational cost of generating the multilevel coupling $T_{i,j}^{F/C}$. When localisation is used along with $r_{loc,c}=0$, one can break the multivariate coupling $T_{i,j}^{F/C}$ down into $d$ separate univariate couplings. As the components are transformed individually, one can simply find a coupling $T^{F/C}_{i,j}(m)$ for each individual component $m$ in the transformed coarse and fine ensembles with the cost function,
\begin{equation}
\sum^{N_{l}}_{i=1}\sum^{N_{l}}_{j=1}T^{F/C}_{i,j}(m)\big(\tilde{X}_{l,t_{k}}^{i}(m)-\tilde{X}_{l-1,t_{k}}^{j}(m))\big)^{2},
\end{equation}
and the same constraints as in (\ref{equation:couplingconstraints}). Each of these couplings can again be found  using the cheap, $O\big(N_{l}log(N_{l})\big)$ (for each $l$) univariate algorithm in \cite{EnsembleForecasting}. One can then reorder / transform each component of fine and coarse ensembles separately using the same methodology as in the last section. This performs a similar role as 
 resampling $N_{l}$ particles from $F^{-1}_{f,m}(u)$ and $F^{-1}_{c,m}(u)$ where $F^{-1}_{f,m}$ / $F^{-1}_{c,m}$ are the marginal (empirical) inverse cumulative distribution functions of $\tilde{X}_{l,t_{k}}(m)$ and $\tilde{X}_{l-1,t_{k}}(m)$ respectively. Here the same uniform variate $u \in [0,1]$ is used for each pair of the $N_{l}$ samples. If localisation is carried out along with $r_{loc,c}>0$, the computational cost of the optimal transport in the standard ETPF and thus the multilevel coupling, minimizing the full multivariate cost function in (\ref{equation:multilevelcouplingcostfunction}), in the MLETPF will rise to $O\big(dN_{l}^{3}log(N_{l})\big)$. This is because $d$ different localised, but still multivariate, optimal transportation problems will have to be solved. Therefore, in this scenario, the model costs are likely to be dwarfed by these optimal transportation costs, which are fixed by definition. In this case there is no justification for implementing the multilevel framework as it only aims to reduce model cost and not the optimal transportation computational expense. However to consider the case where model cost does dominate that of the multivariate optimal transportation, the full multivariate coupling will be demonstrated in the numerical exeriments at the conclusion of this paper. \\
This paper now considers the overall computational cost of the ETPF and MLETPF estimators when localised, with $r_{loc,c}=0$. In this case, as explained above, one can reduce the computational expense of not only the ensemble transform stage, but the multilevel coupling stage as well, in the MLETPF scheme from a potential $O\big(N_{l}^{3}log(N_{l})\big)$ to $O\big(dN_{l}log(N_{l})\big)$ for each multilevel difference estimator. This is enough, with suitable assumptions, to expect that the model computational cost bounds for the standard MLMC method in Theorem \ref{theorem:MLMC MSE} are of the same order of magnitude to that of the entire MLETPF, including the ensemble transform and coupling stages, as one can simply `hide' the optimal transportation costs behind the particle propagation costs. This follows from the proposition.
\begin{proposition}
If $\Delta t \geq h_{0}$ is constant , and one can bound the computational cost of all $N_{y}$ ensemble transform / multilevel coupling stages (with the last term being the cost associated to the sort prior to the algorithm) of the MLETPF by,
\begin{equation}
C_{ET} \leq \sum^{L}_{l=0}\Big(dc_{1}N_{l}N_{y} + de_{1}N_{l}log(N_{l})N_{y} \Big)
\end{equation}
where $c_{1}$ is a positive constant, then the total computational cost of the MLETPF is bounded by,
\begin{equation}
C_{MLETPF} \leq \sum^{L}_{l=0}\Big(c_{2}N_{l}C_{l,d}t_{N_{y}} + de_{1}N_{l}log(N_{l})N_{y}\Big)
\end{equation}
where $C_{l,d}$ is the cost of propagation of one particle on level $l$ (dependent on $d$), $e_{1}$ is a positive constant and $c_{2}$ is a positive constant. 
\label{claim:cost}
\end{proposition}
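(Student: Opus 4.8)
The plan is to split the total cost of the MLETPF into its two constituent parts, the model (particle propagation) cost, which I will write $C_{model}$, and the ensemble transform / multilevel coupling cost $C_{ET}$, and then to show that the latter can be absorbed into the former at the expense of enlarging the leading constant. Since the sorting term $\sum_{l=0}^{L} de_{1}N_{l}\log(N_{l})N_{y}$ already appears verbatim in the target bound, it may be carried through unchanged, and the whole task reduces to dominating $C_{model}$ together with the transform term $\sum_{l=0}^{L} dc_{1}N_{l}N_{y}$ by $\sum_{l=0}^{L} c_{2}N_{l}C_{l,d}t_{N_{y}}$.

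First I would make $C_{model}$ explicit. Because $\Delta t \geq h_{0}$ is constant, the number of assimilation windows is $N_{y}=t_{N_{y}}/\Delta t$ and every level carries an integer number of substeps inside each window, so that the propagation of a single level-$l$ particle over $[t_{0},t_{N_{y}}]$ costs $C_{l,d}t_{N_{y}}$ (here $C_{l,d}$ plays the role of the per-unit-time rate $h_{l}^{-\gamma}$ appearing in \eqref{equation:standardMLMCcost}). At level $l=0$ there are $N_{0}$ particles; for each difference estimator $l>0$ there are $N_{l}$ fine particles of cost $C_{l,d}$ together with $N_{l}$ coarse particles of cost $C_{l-1,d}$. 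Since a coarser discretization is strictly cheaper than a finer one, $C_{l-1,d}\leq C_{l,d}$, so each fine/coarse pair costs at most $2C_{l,d}$ and hence $C_{model}\leq 2\sum_{l=0}^{L}N_{l}C_{l,d}t_{N_{y}}$.

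Adding the hypothesised bound on $C_{ET}$ then gives
\[
C_{MLETPF}=C_{model}+C_{ET}\leq \sum_{l=0}^{L}\Big(2N_{l}C_{l,d}t_{N_{y}}+dc_{1}N_{l}N_{y}\Big)+\sum_{l=0}^{L}de_{1}N_{l}\log(N_{l})N_{y},
\]
so it only remains to dominate $dc_{1}N_{l}N_{y}$ by a multiple of the propagation term $N_{l}C_{l,d}t_{N_{y}}$. Substituting $N_{y}=t_{N_{y}}/\Delta t$ turns this into the pointwise requirement $dc_{1}/\Delta t\leq (c_{2}-2)\,C_{l,d}$ for every $l$. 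This is where the dimension-dependence of the propagation cost is used: propagating a $d$-dimensional state costs at least order $d$ work per substep, so $C_{l,d}\geq d\,C_{\min}$ for some $C_{\min}>0$ (realised on the coarsest level), and the factor of $d$ carried by the transform term cancels the one inside $C_{l,d}$. Choosing $c_{2}=2+c_{1}/(C_{\min}\Delta t)$ then yields the stated bound.

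The main obstacle is precisely this final absorption step: one must justify that $dc_{1}N_{y}$ is dominated by $C_{l,d}t_{N_{y}}$ uniformly in $l$, which rests on two structural facts that have to be made explicit. The first is that $\Delta t$ is held fixed, so that $N_{y}$ and $t_{N_{y}}$ are proportional rather than free parameters; the second is that $C_{l,d}$ grows at least linearly in $d$ and is bounded below by a positive constant. Together these encode the informal assertion that the optimal-transport cost can be \emph{hidden} behind the particle-propagation cost: without a lower bound on $C_{l,d}$ relative to the assimilation rate $dc_{1}/\Delta t$, the transform term could dominate and the conclusion would fail, which is exactly the regime (e.g.\ $r_{loc,c}>0$, or cheap models) excluded in the surrounding discussion.
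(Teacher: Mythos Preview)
Your proposal is correct and follows essentially the same route as the paper: decompose $C_{MLETPF}=C_{model}+C_{ET}$, carry the sorting term through unchanged, convert $N_{y}$ to $t_{N_{y}}/\Delta t$, and then absorb the $dc_{1}$-term into the propagation cost by invoking that $C_{l,d}$ grows at least linearly in $d$. The only cosmetic difference is that you track the factor~$2$ from the coarse/fine pair explicitly, whereas the paper hides it inside an unspecified constant $c_{3}$.
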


\begin{proof}
Let the total computational cost of the MLETPF be given by the sum of the model cost and the ensemble transform cost (including the localised likelihood evaluation in (\ref{equation:localisedweight}), which scales at $O(N_{l})$ due to the sparse diagonal $d \times d$ matrix $\tilde{C}$, with $r_{loc,R}$ assumed constant ($r_{loc,R}=O(1)$) and $<<d$)
\begin{equation}
C_{MLETPF}=C_{ET}+C_{MODEL}
\end{equation}
then bounding $C_{ET}$ as in the claim and using the model cost in (\ref{equation:standardMLMCcost}),
\begin{equation}
\begin{split}
C_{MLETPF} & \leq \sum^{L}_{l=0}\Big(N_{l}(dc_{1}N_{y}+C_{l,d}c_{3}t_{N_{y}}) + de_{1}N_{l}log(N_{l})N_{y}\Big)\\
\quad & \leq \sum^{L}_{l=0}\Big(N_{l}(dc_{4}t_{N_{y}}+C_{l,d}c_{3}t_{N_{y}}) + de_{1}N_{l}log(N_{l})N_{y}\Big)\\
\quad & \leq \sum^{L}_{l=0}\Big(c_{2}N_{l}C_{l,d}t_{N_{y}} + de_{1}N_{l}log(N_{l})N_{y}\Big)
\end{split}
\end{equation}
Here we assume that $C_{l,d}$ will grow at least linearly with $d$, $c_{4}=\frac{c_{1}}{\Delta t}$ and that $c_{3}$ is a positive constant. 
\qquad
\end{proof}
\newline The last term of each of the expressions above comes from the sorting of the univariate particles in the cheap algorithm. Although this computational cost scales at $O\big(N_{l}log(N_{l})\big)$, the constant $e_{1}$ is typically very small with respect to the other constants in the cost expression; furthermore this scaling is a worst case estimate, in many cases this would not be reached. Therefore assuming that this part is less than the remainder of the localised ensemble transform cost, one can bound the overall cost of the MLETPF by the model cost, and keep the corresponding reductions outlined in Theorem \ref{theorem:MLMC MSE}. However, even if this sorting cost does dominate the ensemble transform cost, one still expects to recover computational cost reductions relative to the single level estimator, for a fixed error. Despite the model cost in this case not neccessarily being the highest computational expense in the localised MLETPF, with the cost of the sorting / ensemble transform dominating, it is important to note that for a fixed error bound, this sorting cost / ensemble transform will still typically be less than the model computational cost of the single level ETPF. The reductions in Theorem \ref{theorem:MLMC MSE} would then be a slight underestimation however there would still be evident reductions of computational cost relative to the single level ETPF. This is indeed the case for the numerical examples in the next section, as we show there.

\subsection{{Numerical Examples}}

Numerical examples of the MLETPF method, applied to classical data assimilation problems, are given in this section. Three problems will be studied: the multivariate, chaotic Stochastic Lorenz-63 Equations, the univariate, but nonlinear double-well OU Process and the high dimensional Stochastic Lorenz-96 Equations. 
The algorithm above will be used to generate experimental MLETPF estimators (that are compared against the single level ETPF estimators) for the latter two of the three problems above, along with varying levels of pre-defined of accuracy. This pre-defined level of accuracy, $O(\epsilon)$, will determine $L$ (the finest level / overall numerical discretization bias), the fixed sample sizes for each estimator in the MLETPF method ($N_{l}$) and the fixed overall sample size in the corresponding single level ETPF estimator required to achieve the order of magnitude of this error in both estimators. This follows from the standard Monte Carlo approximation error decomposition given by the Central Limit Theorem, as in \cite{Giles}. One can then compare the computational cost, given as the number of operations, for both the single level and multilevel estimators, which should be in line with Theorem \ref{theorem:MLMC MSE} given the same pre-defined error. The error is not bounded exactly due to variations in the variance at each assimilation step, but a proof of concept from a practioner's viewpoint can be established. \\
The error in the estimators will be estimated by the time-averaged root mean square error (RMSE), given by
\begin{equation}
RMSE=\sqrt{\frac{1}{N_{y}}\sum^{N_{y}}_{k=1}\norm*{\bar{X}_{L,t_{k}}-\mathbb{E}_{\eta_{t_{k}}}[X_{t_{k}}]}^{2}},
\end{equation}
where $\eta_{t_{k}}$ is the posterior distribution of $X_{t_{k}}$ given the observations $Y_{t_{1},...,t_{k}}$. An approximation of $\mathbb{E}_{\eta_{t_{k}}}[X_{t_{k}}]$ will be used in the RMSE calculations above, by computing a standard ETPF estimator for it, of which the numerical discretization bias and sample size produce an estimator with an error orders less than any $\epsilon$ used in the following experiments. Where localisation is used in the single level and multilevel estimators for the problems above, the estimators are inconsistent with $\mathbb{E}_{\eta_{L,t_{k}}}[X_{L,t_{k}}]$, but crucially consistent with each other as the same localisation settings are used. The ETPF approximation of $\mathbb{E}_{\eta_{t_{k}}}[X_{t_{k}}]$ will use the same localisation settings to correctly compare the ETPF and MLETPF estimators like for like. 

The recurrence relation for $N_{l}$, $l \geq 1$, given $N_{0}$ (dependent on $\epsilon$), used in both numerical experiments will be set to $N_{l+1}=\ceil*{N_{l}M^{-3/2}}$, however the optimality of this depends on the relative value of $\beta$ with respect to $\gamma$ \cite{Giles}. This is only optimal for $\beta>\gamma$, $(\beta+\gamma)=3$; this is the case in the numerical examples below. One notes that for a RMSE of $O(\epsilon)$, one requires that $N_{l}=O(\epsilon^{-2})$, and that $N=O(\epsilon^{-2})$ for the single level ETPF. Also, the discretization bias of the multilevel and single level ETPF estimators, from $\mathbb{E}_{\eta_{t_{k}}}[X_{t_{k}}]$, should be  $O(\epsilon)$. Thus, for a numerical scheme that has a global discretization bias of $O(t_{N_{y}}h_{L}^{\alpha})$, one requires
\begin{equation}
L = \ceil*{\frac{log\big((t_{N_{y}}^{\alpha}d)/\epsilon\big)}{\alpha log(M)}}
\end{equation}
for the sum of the multilevel and single level ETPF estimator components bias to be of $O(\epsilon)$. One could also use the maximum among all components' biases to be a suitable measure here. The sample covariances of the independent ETPF estimators will also be measured by a sum among all components: $Tr(\mathbb{V}_{l})$, the trace of the covariance matrix. \\
For the single level ETPF, using the above analysis, one can see how requiring $h_{L}=O(\epsilon)$ and $N=O(\epsilon^{-2})$, the model cost in Equation (\ref{equation:MonteCarloCost}) will be $O(\epsilon^{-3})$, dominating the localised (with $r_{loc,c}=0$) ensemble transform cost $O\big(dNlog(N)\big)$, which is equivalent to $O\big(d\epsilon^{-2}log(\epsilon^{-2})\big)$; this supports the point made in the last section. This is also greater than the localised ensemble transform cost of the MLETPF, $O\big(dN_{l}log(N_{l})\big)$, again equivalent to $O\big(d\epsilon^{-2}log(\epsilon^{-2})\big)$. Thus the computational cost reductions of the MLETPF, even if the sorting cost of the localised ensemble transform dominates, is apparent in these cases. The computational cost for the these numerical examples is defined as the theoretical number of operations needed to compute each approximation, including the ensemble transform stage and the multilevel coupling for the MLETPF. This is computed simply by inserting a step-counter into the numerical implementation (functions etc.) in the Python code used for these experiments. Finally, $M=2$ is used for each numerical example.

\subsubsection{Stochastic Lorenz-63 Equations}

This simple 3-component chaotic nonlinear system in $X=(x,y,z)$,
\begin{equation}
\frac{dX}{dt}=
\begin{cases}
\sigma(y-x)+\phi\frac{dW}{dt},& \\
x(\rho-z)-y+\phi\frac{dW}{dt},& \\
xy-\beta z+\phi\frac{dW}{dt},&
\end{cases}
\end{equation}
with $\rho=28$, $\sigma=10$, $\beta=8/3$, $\phi=0.4$ will be used to demonstrate the effect of the multivariate multilevel coupling, $T^{F/C}_{i,j}$, without localisation and thus with cost function in (\ref{equation:multilevelcouplingcostfunction}). The localised coupling will also be used as a comparison. Here, the Brownian motion $W$ will be the same for each component to keep the strong nonlinearity in the equations. Computational cost against accuracy comparisons with the standard ETPF method will not be investigated here given the low model cost of this test problem and thus the dominating effects of the multilevel coupling and / or the ensemble transform stage in both the ETPF and the MLETPF will make the model cost reductions of the multilevel framework unnoticeable. The MLETPF estimator with $N_{0}=500$, and $L=5$, using the full multivariate cost function in (\ref{equation:multilevelcouplingcostfunction}) to find the multilevel couplings $T^{F/C}_{i,j}$ in the aforementioned algorithm, is used to compute an approximation to $\mathbb{E}_{\eta_{L,t_{k}}}[X_{L,t_{k}}]$, with $X$ as above, for $k \in [1,5120]$, with $h_{0}=M^{-7}=\Delta t$, and thus $t_{N_{y}}=40$. Here, $X_{L}$ is the solution to the above Lorenz-63 equations using the forward Euler numerical scheme. The reason that we choose the minimum level to be equivalent to $l=7$ is for stability when using the Euler method. Using different numerical methods, with greater stability at greater time-steps, and thus lower levels would be able to decrease this minimum level. The observations are given by a measurement error with $R=2I$, where $I$ is the $3 \times 3$ identity matrix and weights are thus based on observations of all components $x$, $y$ and $z$. Figure \ref{figure:Experiment0_MeanAsymptotic} shows the mean estimates of $Tr(\mathbb{V}_{l})$ ($l \in [1,5]$) over all assimilation steps $k \in [1,N_{y}]$. The asymptotic decay of the above estimates show importantly that the multilevel coupling, with the multivariate cost function, successfully produces the variance decay, $Tr(\mathbb{V}_{l})=O(h_{l}^{\beta})$, in this case $\beta \approx 1$. The figure also shows the value of $\beta$ (variance decay) for the case where $r_{loc,c}=0$ localisation for the coupling. Localisation being used in a problem such as the strongly nonlinear Lorenz 63 system is dangerous due to the decay in correlations between components, but with the parameters above, it is used simply to compare the rates of variance decay with the non-localised case. One recovers $\beta \approx 2$ in the localised case; this showing that refining the optimal transport down to the one dimensional localisation case is beneficial for variance reduction but comes with the sacrifice of inconsistency of the reference ``single level'' estimator .

\begin{figure}[ht]
  \centering
  \includegraphics[width=100mm]{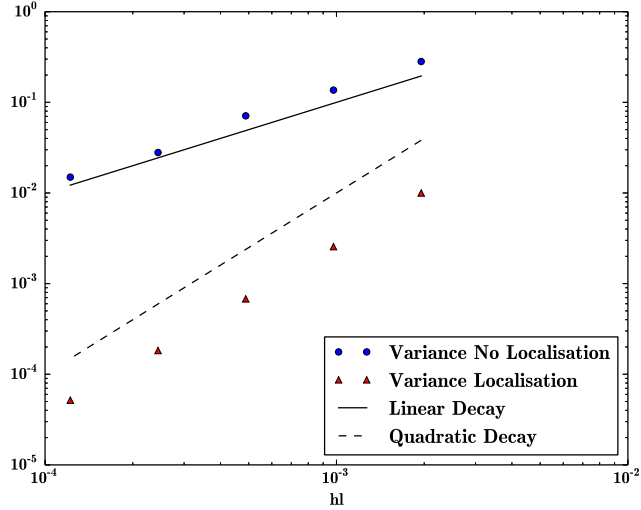}
  \caption{\textit{Mean, over all assimilation steps, $k \in [1,N_{y}]$, of the estimates of $Tr(\mathbb{V}_{l})$, with $l \in [1,5]$ for the Stochastic Lorenz-63 Equations. Both the non-localised and the $r_{loc,c}=0$ localised cases are shown.}}
  \label{figure:Experiment0_MeanAsymptotic}
\end{figure}

\subsubsection{Double-well OU Process}

This is a univariate test problem that demonstrates the cost effective, consistent, MLETPF estimator of $\mathbb{E}_{\eta_{L,t_{k}}}[X_{L,t_{k}}]$, where $X_{L,t_{k}}$ is a numerical discretized solution to the double-well, nonlinear OU process,
\begin{equation}
dX_{t_{k}}=-V'(X_{t_{k}})dt+\xi dW_{t_{k}}
\end{equation}
$k \in [1,N_{y}]$ and $W_{t_{k}}$ is a standard Brownian Motion. Here, $V(X_{t_{k}})=\frac{1}{4}X_{t_{k}}^{4}-\frac{1}{2}X_{t_{k}}^{2}$. This example uses $h_{l}=2^{-4-l}$ but this is arbitrarily chosen. The stochastic forcing is set to $\xi=0.5$. The observations and assimilation times were given by $R=0.6$, $t_{N_{y}}=50$ where $\Delta t=h_{0}$ and so $N_{y}=800$. The numerical discretizations of $X_{t_{k}}$ are computed by the Euler-Maruyama numerical scheme. The parameters above produce a stable numerical solution for a single realisation of the above system when using this scheme for the time frame above. A very accurate simulation ($N_{0}=10000$, $L=7$) of the MLETPF estimator is run to demonstrate the mean asymptotic decay of $V_{l}$ and $\abs*{\hat{X}_{l,t_{k}}}$ ($l \in [1,7]$) over all assimilation steps. These are shown in Figure \ref{figure:Experiment1_MeanAsymptotic}. The values of $\alpha \approx 1$, $\beta \approx 2$, are as expected given the Euler-Maruyama global discretization bias of $O(h_{l})$ and the additive noise in the OU Process contributing to the variance.

\begin{figure}[ht]
  \centering
  \includegraphics[width=100mm]{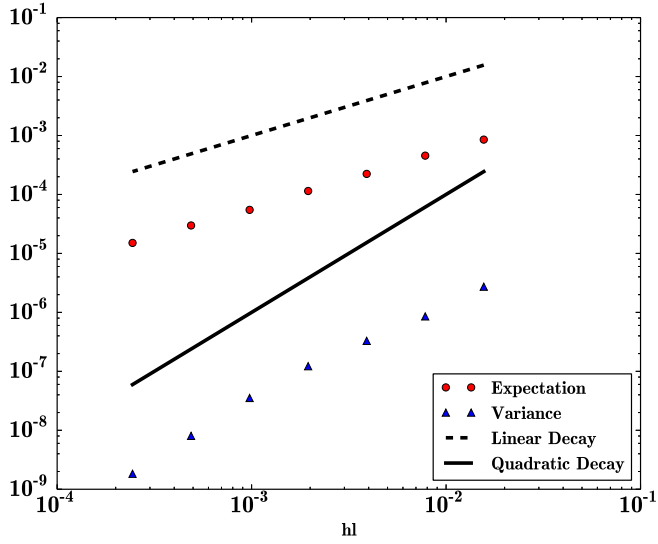}
  \caption{\textit{Mean, over all assimilation steps, $k \in [1,N_{y}]$, of the estimates of $\mathbb{V}_{l}$ (variance) and $\abs*{\hat{X}_{l,t_{k}}}$ (expectation) with $l \in [1,7]$ for the double-well OU process.}}
  \label{figure:Experiment1_MeanAsymptotic}
\end{figure}

\begin{figure}[ht]
  \centering
  \includegraphics[width=100mm]{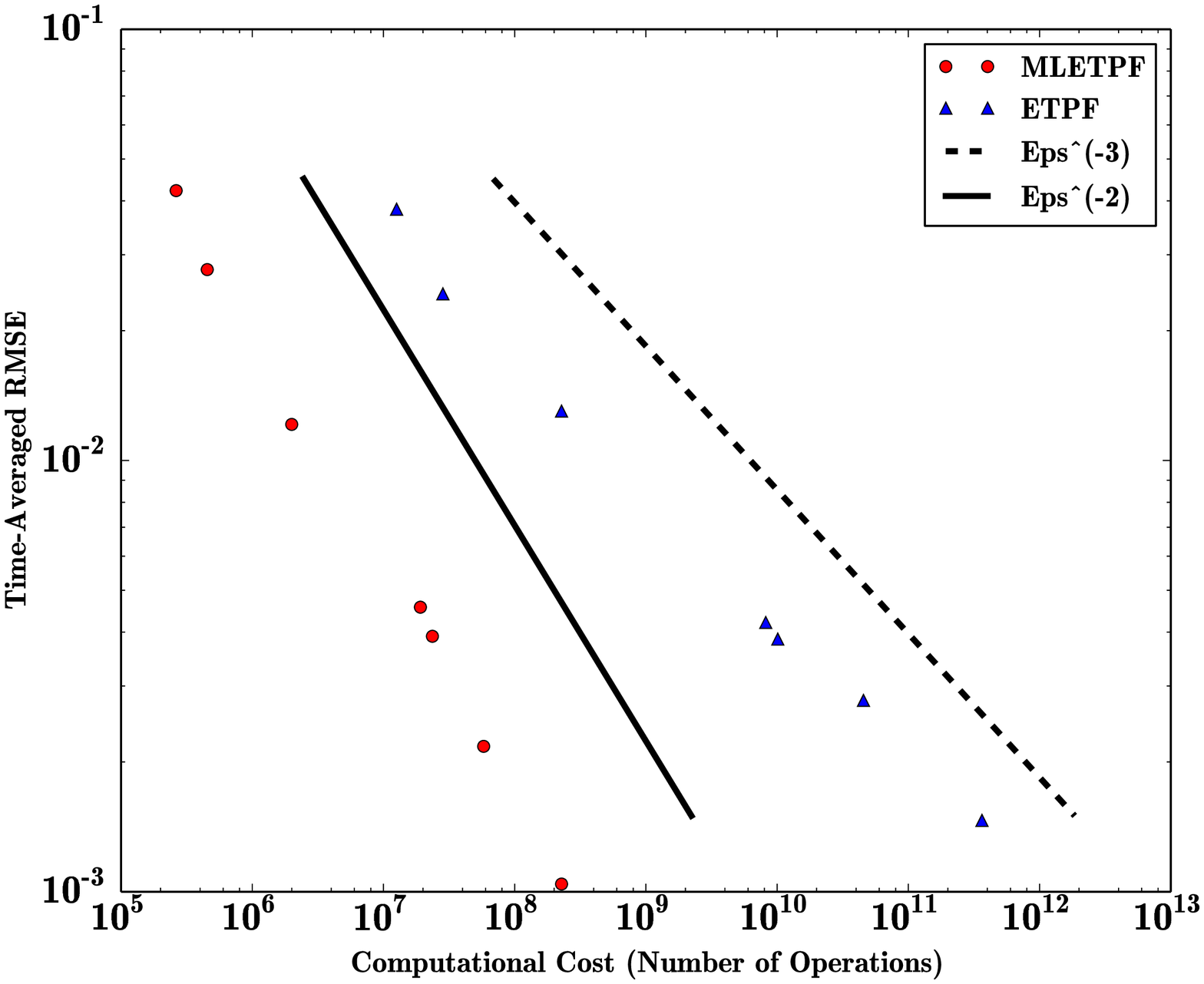}
  \caption{\textit{Computational Cost (number of operations) against the time-averaged RMSE of the ETPF and MLETPF estimators for the double-well OU process. Reference lines show the orders of decay of $RMSE^{-2}$ and $RMSE^{-3}$.}}
  \label{figure:Experiment1_CostVsAccuracy}
\end{figure}

Figure \ref{figure:Experiment1_CostVsAccuracy} shows the computational cost against the accuracy (RMSE) for the MLETPF and the single level ETPF estimators over varying values of $\epsilon$. Here one sets $N_{0}=\epsilon^{-2}$ for the MLETPF and $N=\epsilon^{-2}$ for the single level ETPF estimator. One can clearly see the expected orders of growth for the computational cost of the standard ETPF ($O(\epsilon^{-3})$, as $\gamma=1$ and $\alpha=1$) and the MLETPF ($O(\epsilon^{-2})$, given that $\gamma<\beta$) that were shown in Theorem \ref{theorem:MLMC MSE} for the pre-defined RMSE of $O(\epsilon)$.

\subsubsection{The Stochastic Lorenz-96 Equations}

The final numerical test for the MLETPF method in this paper is the high dimensional ($d=40$ in this case) Stochastic Lorenz-96 Equations, given by,
\begin{equation}
\frac{dX_{j}}{dt}=-\frac{\big(X(j-1)X(j+1)-X(j-2)X(j-1)\big)}{3\Delta X}-X(j)+F+\sigma^2\frac{dW_j}{dt},
\end{equation}
with $j \in [1,N_{x}]$, $N_{x}=40$, $N_{x}\Delta x=10$ and thus $\Delta X=0.25$;
$dW_j/dt$ are 40 i.i.d. Brownian motions.
 Here, $F$ is a constant forcing ($F=8$) and $\sigma^2=0.4$. Periodic boundary conditions are used, so that $X(-1)=X(N_{x})$. The observations were given by a measurement error of $R=6I$, where $I$ is the $40 \times 40$ identity matrix and assimilation times were set to $t_{N_{y}}=100$ where $\Delta t=h_{0}$ meaning $N_{y}=1600$ as $h_{l}=2^{-4-l}$ (again simply arbitrary). The Euler-Maruyama method is used to find $X_{l,t_{k}}$ once again here. In this numerical example, the ETPF and MLETPF estimators use $r_{loc,R}=1$. The localisation setting of $r_{loc,c}=0$ is used for both the multilevel and single level ETPF estimators here, due to the model cost, $C_{l,d}$, being simply equal to $h_{l}^{-1}d$, and thus much lower than that of high optimal transportation costs in multiple dimensions. \\
Once again, a very accurate simulation ($N_{0}=1000$, $L=10$) of the MLETPF was generated to demonstrate the mean asymptotic decays of $\sum^{40}_{i=1}\abs*{\hat{X}_{l,t_{k}}(i)}$ and $Tr(\mathbb{V}_{l})$ ($l \in [1,10]$) over all assimilation steps and these are shown in Figure \ref{figure:Experiment2_MeanAsymptotic}. These follow the same, expected, values of $\alpha \approx 1$, $\beta \approx 2$, as with the last example. 

\begin{figure}[ht]
  \centering
  \includegraphics[width=100mm]{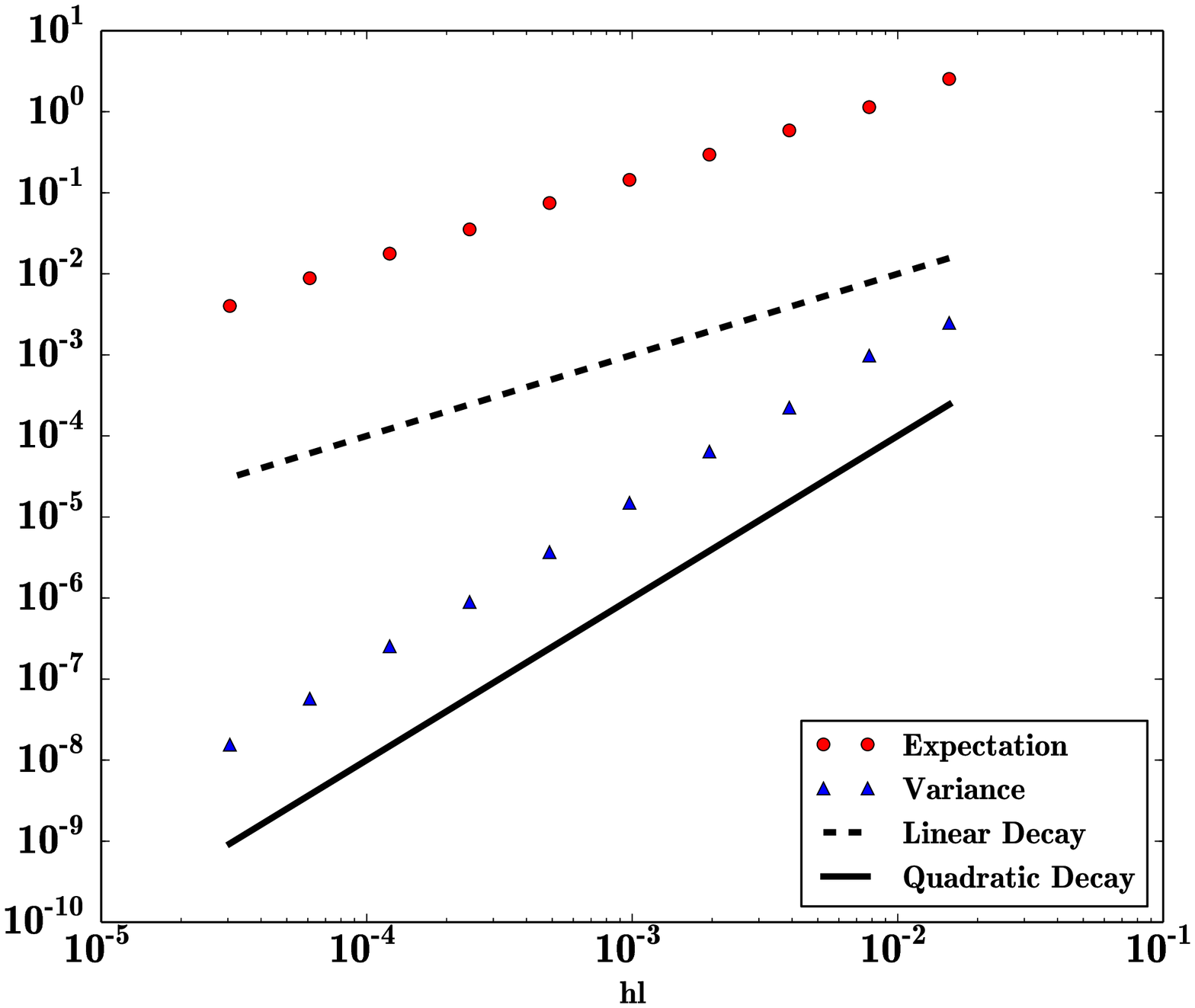}
  \caption{\textit{Mean, over all assimilation steps, $k \in [1,N_{y}]$, of the estimates of $Tr(\mathbb{V}_{l})$ (variance) and $\sum^{40}_{i=1}\abs*{\hat{X}_{l,t_{k}}(i)}$ (expectation) with $l \in [1,10]$ for the Stochastic Lorenz-96 Equations.}}
  \label{figure:Experiment2_MeanAsymptotic}
\end{figure}

\begin{figure}[ht]
  \centering
  \includegraphics[width=100mm]{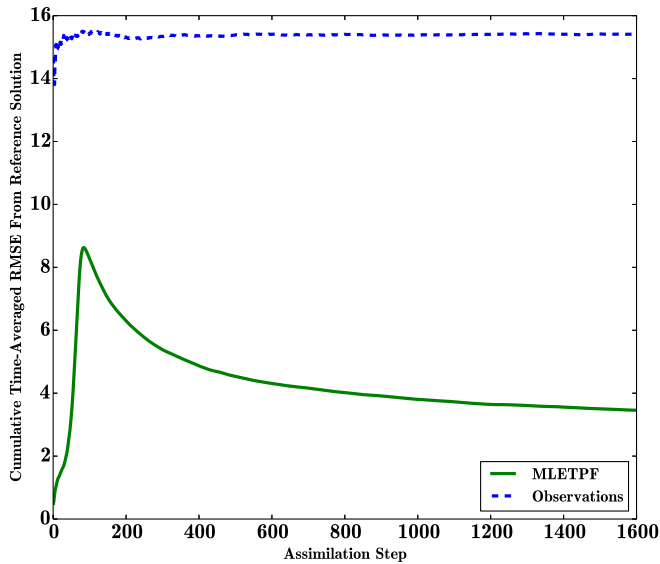}
  \caption{\textit{The cumulative time-averaged RMSE of the observations and MLETPF estimator away from the reference solution, $X_{t_{k}}'$, for the Stochastic Lorenz-96 Equations.}}
  \label{figure:Experiment2_StabilityofError}
\end{figure}

\begin{figure}[ht]
  \centering
  \includegraphics[width=100mm]{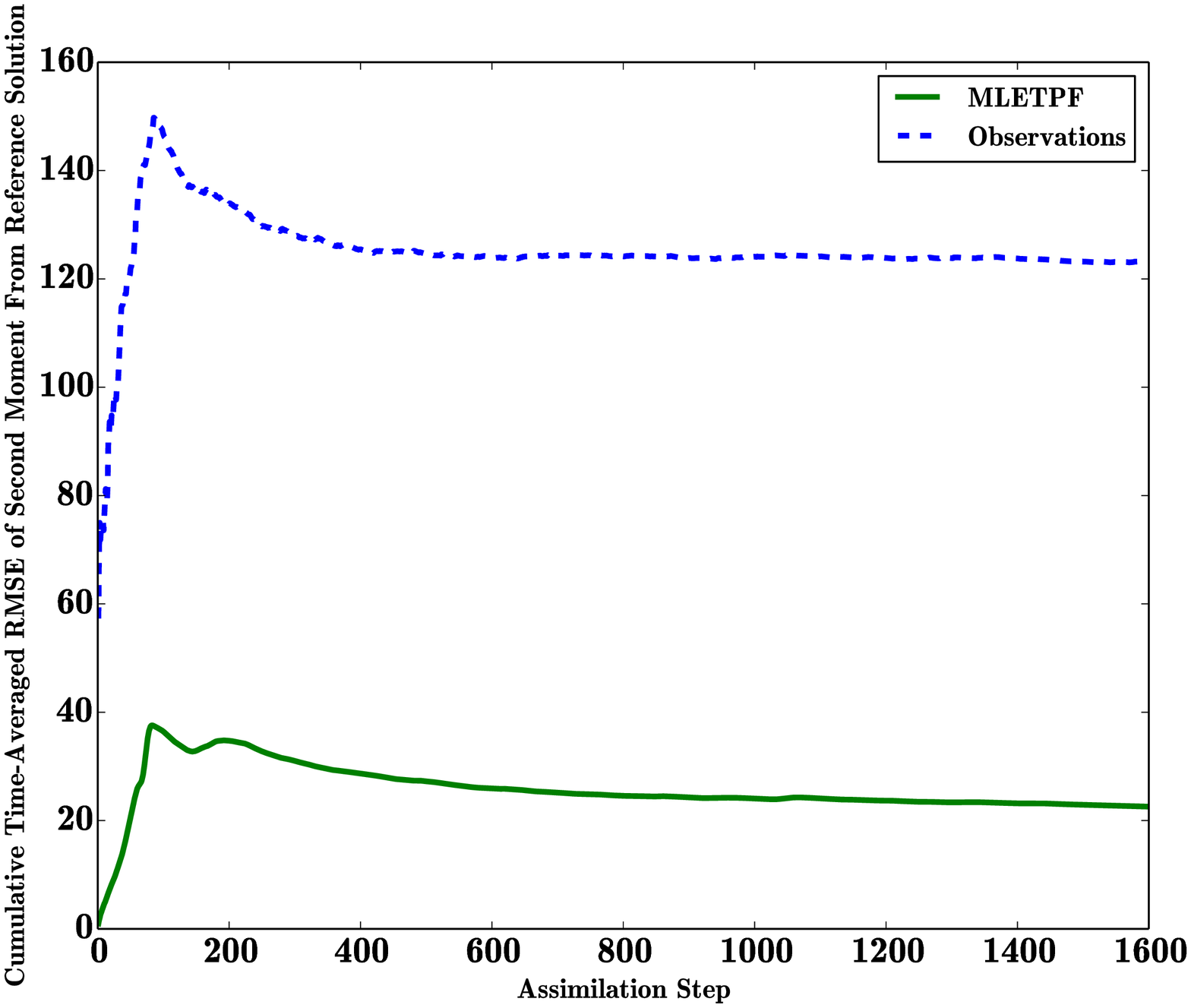}
  \caption{\textit{The cumulative time-averaged RMSE of the second moment of the observations and MLETPF estimator away from the second moment of the reference solution, $(X_{t_{k}}')^{2}$, for the Stochastic Lorenz-96 Equations.}}
  \label{figure:Experiment2_StabilityofErrorSecondMoment}
\end{figure}

\begin{figure}[ht]
  \centering
  \includegraphics[width=100mm]{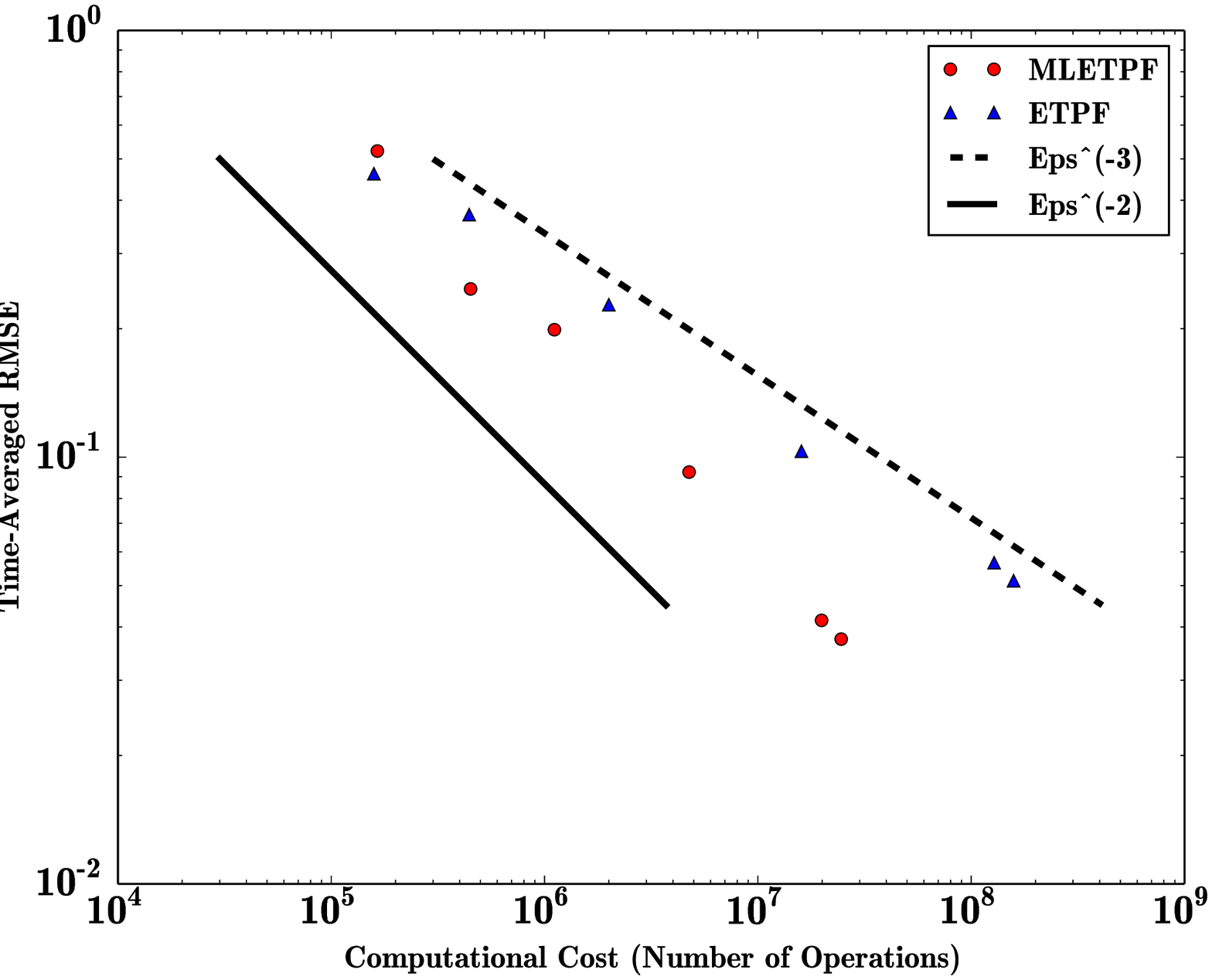}
  \caption{\textit{Computational Cost (number of operations) against the time-averaged RMSE of the ETPF and MLETPF estimators for the Stochastic Lorenz-96 Equations. Reference lines show the orders of decay of $RMSE^{-2}$ and $RMSE^{-3}$.}}
  \label{figure:Experiment2_CostVsAccuracy}
\end{figure}

Next, the stability of the MLETPF is considered. Since $N_{l}$ is fixed and does not change to bound error over time, we can look at the errors from the reference solution, $X_{t_{k}}'$, compared to the observational errors, to study the stability of the MLETPF estimator over time and check that the errors do not increase. One expects that, for a successful particle filter, the errors from the estimator should be less than the observational errors and remain stable. Figure \ref{figure:Experiment2_StabilityofError} shows this expected behaviour, where the cumulative time-averaged RMSE values from $X_{t_{k}}'$, of both the observations and the MLETPF estimator (using arbitrary values of $N_{0}=1000$ and $L=10$) are shown. For $k \in [1,N_{y}]$, these are defined to be,
\begin{equation}
\sqrt{\frac{1}{k}\sum^{k}_{i=1}\norm*{Y_{t_{i}}-X_{t_{i}}'}^{2}}
\end{equation}
for the observations and,
\begin{equation}
\sqrt{\frac{1}{k}\sum^{k}_{i=1}\norm*{\bar{X}_{L,t_{i}}-X_{t_{i}}'}^{2}}
\end{equation}
for the MLETPF estimator. To demonstrate the stability of the variance of the particle filter, cumulative time-averaged RMSE values for the second moments of $\bar{X}_{L,t_{k}}$ and $Y_{t_{k}}$ are shown in Figure \ref{figure:Experiment2_StabilityofErrorSecondMoment}. Finally, to compare the MLETPF with it's standard counterpart for this set of equations, Figure \ref{figure:Experiment2_CostVsAccuracy} shows the computational cost against the accuracy (RMSE) for the standard ETPF and the MLETPF estimators over varying values of $\epsilon$. Once again, one sets $N_{0}=\epsilon^{-2}$ for the MLETPF and $N=\epsilon^{-2}$ for the single level ETPF estimator. This follows the successful cost reductions achieved in the last example, defined in Theorem \ref{theorem:MLMC MSE}.

\section{Summary and outlook}
\label{sec:summary}

This paper has demonstrated a proof of concept for the application of MLMC to nonlinear filtering. The Ensemble Transform Particle Filter (ETPF), coupled with localisation, allows one to simply and cheaply carry out a multilevel coupling between each fine and coarse ensemble in each independent Monte Carlo estimator in the MLMC framework. A recent study has also proposed a framework to apply MLMC to nonlinear filtering with a modified random resampling step in the standard particle filtering methodology to couple particles from coarse and fine levels \cite{Jasra}. In contrast, the coupling in the present paper is designed to minimise the Wasserstein distance between the distributions of these transformed ensembles (in the standard ETPF methodology), originally suggested in \cite{GilesBook}. It has been shown through numerical experiments that one can restore positive correlation between fine and coarse ensembles which might have been lost if they had been transformed independently of one another. 
This in turn satisfies the neccessary constraints on the sample variance of each independent multilevel estimator, allowing the proposed MLETPF method to reduce the computational cost of the propagation of particles in the localised ETPF method. 

In general, localisation with $r_{loc,c}=0$ makes the computational cost of this coupling and the ensemble transform in the MLETPF cheap enough  that the multilevel framework can return overall computational cost reductions from the standard ETPF methods; the aim of the paper. It must be noted that although this paper has only touched on the case where the very crude $r_{loc,c}=0$ localisation is considered, due to small model cost test problems, this method could also be applied to high-dimensional systems where the model cost dominates that of the multivariate optimal transportation. One can do this without any crude constraints on $r_{loc,c}$ using the full multivariate coupling methodology presented in this paper and demonstrated numerically. However whether the variance decay of $\mathbb{V}_{l}$ from such a multivariate coupling would hold, producing as strong results, in the limit of $d>>N$ is unknown and thus the issue of how one would adjust this coupling to be used alongside other values of $r_{loc,c}$ to reduce the dimensionality of these multivariate couplings remains to be explored. \\
Iterative and approximate schemes for solving discrete optimal transportation problems have been an area of rapid research in the last few years \cite{Ravi} and this offers the chance to improve the multilevel coupling in the proposed method by reducing computational cost. This could be done by trading-off between the optimality and computational cost of the coupling for each $l$, e.g. more expensive / optimal couplings for greater $l$ with lower sample sizes $N_{l}$. \\
The form of the coupling used in this paper is simple to implement and has the potential to be used in plenty of applications, in and outside of data assimilation, whenever one wishes to establish consistent correlation between two distributions for variance reduction. Considering an extension for the multidimensional example presented in this paper, one could also apply a spatial multilevel framework, setting the spatial resolution ($\Delta X_{l}$) to be dependent on the level of discretization, as done in \cite{Cliffe,Barth} to gain even more significant cost reductions.

\bibliographystyle{siam}
\bibliography{refs}

\end{document}